\definecolor{Gray}{gray}{0.9}
\begin{document}
\selectfont
\define\lG{\lambda(G)}
\define\lg{\lambda(g)}
\define\Gal{Gal}
\define\lgi{\lambda(g)^{-1}}
\def\on#1{\begin{equation*}#1\end{equation*}}
\def\al#1{\begin{align*}#1\end{align*}}
\def\gen#1{\langle #1\rangle }
\define\rat{\mathbb{Q}}
\define\Fq{\mathbb{F}_q}
\define\Fqtoo{\mathbb{F}_{q^{p^2}}}
\define\Fqn{\mathbb{F}_{q^{{p^n}}}}

\newcommand{\verteq}{\rotatebox{90}{$\,=$}}
\newcommand{\equalto}[2]{\underset{\scriptstyle\overset{\mkern4mu\verteq}{#2}}{#1}}
\define\trip[#1][#2][#3]{\underset{#1}{\underset{#2}{#3}}}
\define\br[#1]{\{#1\}}

\def\keywords#1{\def\@keywords{#1}}
\parskip=0.125in
\title{Characteristic Subgroup Lattices and Hopf-Galois Structures}
%\date{\today}
\date{}
\author{Timothy Kohl\\
Department of Mathematics and Statistics\\
Boston University\\
Boston, MA 02215\\
tkohl@math.bu.edu}
\maketitle
\begin{abstract}
The Hopf-Galois structures on normal extensions $K/k$ with $G=Gal(K/k)$ are in one-to-one correspondence with the set of regular subgroups $N\leq B=Perm(G)$ that are normalized by the left regular representation $\lambda(G)\leq B$. Each such $N$ corresponds to a Hopf algebra $H_N=(K[N])^G$ that acts on $K/k$. Such regular subgroups $N$ need not be isomorphic to $G$ but must have the same order. One can subdivide the totality of all such $N$ into collections $R(G,[M])$ which is the set of those regular $N$ normalized by $\lambda(G)$ and isomorphic to a given abstract group $M$ where $|M|=|G|$. There arises an injective correspondence between the characteristic subgroups of a given $N$ and the set of subgroups of $G$ stemming from the Galois correspondence between sub-Hopf algebras of $H_N$ and intermediate fields $k\subseteq F\subseteq K$. We utilize this correspondence to show that for certain pairings $(G,[M])$, the collection $R(G,[M])$ must be empty.
\end{abstract}
\noindent {\it key words:} Hopf-Galois extension, Greither-Pareigis theory, regular subgroup, Galois correspondence\par
\noindent {\it MSC:} 16T05,20B35,20E07\par
\renewcommand{\thefootnote}{}
\section{Introduction}
Hopf-Galois theory is a generalization of the ordinary Galois theory for fields. If one has a Galois extension of fields $K/k$ with $G=Gal(K/k)$ then the elements of $G$ act as automorphisms of course, but if one takes $k$-linear combinations of these automorphisms, one gets an injective homomorphism $\mu:k[G]\rightarrow End_k(K)$ where $\mu(\sum_{g\in G}c_g\cdot g)(a)=\sum_{g\in G}g(a)$, since a sum of automorphisms is no longer an automorphism but is an endomorphism of $K$. As such, we replace the group $G$ by the group ring, and prototype Hopf algebra, $k[G]$. Furthermore, by linear independence of characters, one has that $K\otimes k[G]\cong End_k(K)$, which means that if we augment these sums of automorphisms by left-multiplication by elements of $K$ then this yields all the $k$-endomorphisms of $K$. To be more precise, the previous isomorphism is actually $K\# k[G]\cong End_k(K)$ where $K\# k[G]$ is the so-called smash product of $K$ with $k[G]$ which, as a vector space is $K\otimes_k k[G]$ but the multiplication is as follows $(a\# h)(a' \# h')=a h(a') \# hh'$ where $a,a'\in K$ and $h,h'\in k[G]$. Moreover, if $h=\sum_{g\in G}c_g\cdot g\in k[G]$ then if $x\in k$ then 
$$
h(x)=\sum_{g\in G}c_g g(x)=(\sum_{g\in G}c_g)x
$$
namely, $h$ acts by scalar multiplication on $x$. The idea behind Hopf-Galois theory is to find a Hopf algebra which acts in a similar fashion as $k[G]$ does when the extension is Galois in the usual sense. The formal definition is as follows.
\begin{definition}
An extension $K/k$ is Hopf-Galois if there is a $k$-Hopf algebra $H$ and a $k$-algebra homomorphism $\mu:H\rightarrow End_k(K)$ such that
\begin{itemize}
\item $\mu(ab)=\sum_{(h)}\mu(h_{(1)}(a)\mu(h_{(2)})(b)$
\item $K^{H}=\{a\in K\ |\ \mu(h)(a)=\epsilon(h)a\ \forall h\in H\}=k$
\item $\mu$ induces $I\otimes\mu:K\#H\overset{\cong}\rightarrow End_k(K)$\par
\noindent where $\Delta(h)=\sum_{(h)}h_{(1)}\otimes h_{(2)}$ is the comultiplication in $H$ and $\epsilon:H\rightarrow k$ is the co-unit map.
\end{itemize}
\end{definition}
The original intended application \cite{ChaseSweedler1969} was to devise a Galois theory for purely inseparable extensions. However, it turned not to be suitable to extensions of exponent greater than $1$. However, in \cite{GreitherPareigis1987} Greither and Pareigis showed that Hopf-Galois theory can be effectively applied to separable extensions, especially those which are non-normal. As such, one obtains a 'Galois structure' on extensions such as $\mathbb{Q}(\root 3\of 2)/\mathbb{Q}$ which aren't Galois extensions in the usual sense. There are two particularly interesting features to this result, namely a given extension $K/k$ may have more than one Hopf-Galois structure on it, and also, an extension which {\it is} Galois in the usual sense (and thus Hopf-Galois with respect to the group ring $k[G]$) but also Hopf-Galois with respect to other Hopf algebra actions. It is the latter case that we are looking at here, and we give the main theorem in \cite{GreitherPareigis1987} for such extensions:
\begin{theorem}\cite{GreitherPareigis1987}
Let $K/k$ be a finite Galois extension with $G=Gal(K/k)$.
$G$ acting on itself by left translation yields an embedding
$$
\lambda:G\hookrightarrow B=Perm(G)$$
Definition: $N\leq B$ is {\it regular} if $N$ acts transitively and fixed point freely on $G$. The following are equivalent:\par
\begin{itemize}
\item There is a $k$-Hopf algebra $H$ such that $K/k$ is $H$-Galois\par
\item There is a regular subgroup $N\leq B$ s.t. $\lambda(G)\leq Norm_B(N)$ where $N$ yields $H=(K[N])^{G}$.
\end{itemize}
\end{theorem}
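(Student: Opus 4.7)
The plan is to pass between $H$ and $N$ via base change to $K$ and Galois descent. The key object is the étale $K$-algebra $K\otimes_k K$, which by linear independence of characters decomposes as $Map(G,K)$ with primitive idempotents $\{e_g\}_{g\in G}$ indexed by $G$, and on which $G$ acts on the left-hand tensor factor by Galois while simultaneously permuting the $e_g$ by left translation. Any Hopf algebra $H$ acting on $K/k$ becomes, after tensoring with $K$, a $K$-Hopf algebra $K\otimes_k H$ acting on $Map(G,K)$; conversely, any $N\leq Perm(G)$ acts naturally by permuting the $e_g$. The task is to exhibit these as two faces of a single construction, with the normalizer condition on $N$ matching $G$-equivariance of the descent datum.

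For the direction $(\Leftarrow)$, suppose $N$ is regular with $\lambda(G)\leq Norm_B(N)$. I would let $G$ act on $K[N]$ diagonally, by the Galois action on $K$ and by $n\mapsto \lambda(g)n\lgi$ on $N$ (well defined precisely by the normalizer hypothesis). Since this action is by Hopf algebra automorphisms, ordinary Galois descent yields a $k$-Hopf algebra $H=(K[N])^G$ with $K\otimes_k H\cong K[N]$. For the action on $K$, I would use the action of $N\subseteq Perm(G)$ on $Map(G,K)\cong K\otimes_k K$: taking $G$-invariants on the left tensor factor recovers $K$, and on the coefficient side recovers $H$, producing $\mu:H\to End_k(K)$. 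Regularity of $N$ on $G$ translates, upon tracking dimensions and invoking faithfully flat descent of the corresponding $K$-statement $K[N]\#Map(G,K)\cong End_K(Map(G,K))$, into the required smash product isomorphism $K\#H\cong End_k(K)$.

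For $(\Rightarrow)$, assume $K/k$ is $H$-Galois. Base change the smash product condition to $K$ to obtain $(K\otimes_k K)\#(K\otimes_k H)\cong K\otimes_k End_k(K)$, i.e.\ $Map(G,K)\#(K\otimes_k H)\cong End_K(Map(G,K))$. Because $Map(G,K)$ is a split étale $K$-algebra of rank $|G|$ and the Hopf action is faithful of matching rank, I would argue that $K\otimes_k H$ is forced to be a group $K$-algebra $K[N]$, with $N$ equal to its set of grouplike elements acting simply transitively on the idempotents $\{e_g\}$; this supplies an embedding $N\hookrightarrow Perm(G)=B$ with $N$ regular. The $G$-semilinear descent structure making $(K[N])^G = H$ is, on grouplikes, a $G$-action on $N$ by group automorphisms; realizing it as conjugation inside $B$ by $\lambda(G)$ is precisely the statement $\lambda(G)\leq Norm_B(N)$.

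The main obstacle is the reverse direction, specifically the structural claim that $K\otimes_k H$ must be the group $K$-algebra of its own grouplikes and that those grouplikes land inside $Perm(G)$. The heart of the matter is a recognition theorem: a cocommutative finite $K$-Hopf algebra acting faithfully on a split étale $K$-algebra of the same rank with the smash-product isomorphism is necessarily spanned by grouplikes that permute the primitive idempotents regularly. Once this is in hand, propagating the descent datum through the correspondence and identifying it as conjugation by $\lambda(G)$ is bookkeeping; before it is in hand, nothing forces the abstract Hopf algebra $H$ to arise from a subgroup of $Perm(G)$ at all.
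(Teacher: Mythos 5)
You should first note that the paper does not prove this theorem at all: it is quoted with a citation to Greither--Pareigis, so the only meaningful comparison is with the original 1987 argument --- and your outline is in essence a faithful reconstruction of that argument: base change along $k\to K$, the identification $K\otimes_k K\cong Map(G,K)$ with $G$ acting semilinearly and permuting the primitive idempotents by left translation, Galois descent matching $k$-structures with $G$-equivariant $K$-structures, and the translation of the descent datum on $N$ into conjugation by $\lambda(G)$, which is exactly the normalizer condition. All of that is sound, including your own diagnosis that the entire weight of the theorem rests on the recognition lemma you state but do not prove.

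That lemma is the one genuine gap, and it should not be left as an article of faith; note also that cocommutativity need not be assumed --- it falls out as a consequence. If $S=Map(X,K)$ is $H'$-Galois over $K$ with $\dim_K H'=|X|$, pass to the equivalent comodule formulation and consider the Galois map $\beta:S\otimes_K S\to S\otimes_K (H')^{*}$ given by $s\otimes t\mapsto (s\otimes 1)\rho(t)$. Because $S$ is commutative, $\beta$ is a homomorphism of $S$-algebras (via the left factor), and the Hopf--Galois condition makes it bijective. Cutting down by a single primitive idempotent $e_x\otimes 1$ of the left factor yields $(H')^{*}\cong e_x\cdot(S\otimes_K S)\cong Map(X,K)$ as $K$-algebras, so $(H')^{*}$ is split commutative \'etale; dualizing back, $H'$ has a $K$-basis of grouplike elements, i.e.\ $H'=K[N]$ with $N$ its group of grouplikes. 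These act as algebra automorphisms of $S$, hence permute the primitive idempotents, giving $N\to Perm(X)$; faithfulness and regularity of this action then follow from the smash-product isomorphism by a dimension count. With this lemma supplied, both of your directions close up exactly as you describe, so the proposal is essentially the Greither--Pareigis proof with one acknowledged lemma left outstanding.
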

We note that $N$ must necessarily have the same order as $G$, but need not be isomorphic. As such, the enumeration of Hopf-Galois structures on a normal extension $K/k$ becomes a group theory problem. To organize the enumeration of the Hopf-Galois structures, one considers
$$
R(G)=\{N\leq B\ |\text{$N$ regular and $\lambda(G)\leq Norm_B(N)$}\}
$$
which are the totality of all $N$ giving rise to H-G structures, which we can subdivide into isomorphism classes given that $N$ need not be isomorphic to $G$, to wit, let
$$
R(G,[M])=\{N\in R(G)\ |\ N\cong M\}
$$
for each isomorphism class $[M]$ of group of order $|G|$. Now, the enumeration of $R(G,[M])$ for different pairings of groups of different types has been extensively studied by the author and others, e.g. \cite{Kohl1998},\cite{Byott2004pq},\cite{FCC12},\cite{CarnahanChilds1999}. One may consider the enumeration based on the different types or sizes of the groups in question, such as  $G$ cyclic, elementary abelian, $G=S_n$, $G=A_n$, $|G|=mp$, $G$ simple, $G$,$M$ nilpotent and more. What we shall consider is when $R(G,[M])=\O$.\par
The condition that $\lambda(G)\leq Norm_B(N)$ is the deciding factor as to whether a given regular subgroup $N\leq B$ gives rise to a Hopf-Galois structure. And, as such, this condition may, for $N$ of a given isomorphism type $[M]$, imply that $R(G,[M])=\O$. In some instances, basic structural properties of the groups $G$ and $N$ preclude the containment $\lambda(G)\leq Norm_B(N)$, for example in \cite{Kohl1998} it is shown that $R(C_{p^n},[M])=\O$ if $M$ is non-cyclic by comparing the exponent of $C_{p^n}$ versus that of the Sylow $p$-subgroup of $Norm_B(N)$ when $N$ is a $p$-group. For other cases, some deeper analysis is needed. In \cite{Byott2004simple}, Byott proved that if $G$ is simple then if $M\not\cong G$ then $R(G,[M])$ is empty, but the proof of this required the classification of finite simple groups. \par
We note that if $N$ is any regular subgroup of $B$ then (by basically \cite[Theorem 6.3.2]{Hall1959}) $Norm_B(N)$ is canonically isomorphic to $Hol(N)\cong N\rtimes Aut(N)$. More generally, one can enumerate $R(G,[M])$ by first enumerating
$$
S(M,[G])=\{U\leq Norm_B(M)\ |\ U\text{ regular and }U\cong G\}
$$
where, again, for $M$ a regular subgroup of $B$, $Norm_B(M)\cong Hol(M)$. That is we consider those regular subgroups of $Norm_B(M)\cong Hol(M)$ that are regular and isomorphic to $G$. The correspondence between $|R(G,[M])|$ and $|S([M,[G])|$ is given by the following result due to Byott \cite[p.3220]{Byott1996} which we translate into the terminology we have already established.
\begin{proposition}
For $G$ and $[M]$ as given above
$$
|R(G,[M])|=\dfrac{|Aut(G)|}{|Aut(M)|}|S(M,[G])|
$$
where $Aut(M)$ and $Aut(G)$ are the automorphism groups of $M$ and $G$.
\end{proposition}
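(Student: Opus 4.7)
The plan is to prove the identity by exhibiting a bijection between two auxiliary sets of pairs whose cardinalities manifestly involve $|R(G,[M])|$ and $|S(M,[G])|$ respectively. Let $\mathcal{P}$ denote the set of pairs $(N,\phi)$ with $N\in R(G,[M])$ and $\phi:M\to N$ an isomorphism, and let $\mathcal{Q}$ denote the set of pairs $(U,\psi)$ with $U\in S(M,[G])$ and $\psi:G\to U$ an isomorphism. Since each regular subgroup $N\in R(G,[M])$ admits exactly $|Aut(M)|$ isomorphisms from $M$, one has $|\mathcal{P}|=|Aut(M)|\cdot|R(G,[M])|$, and similarly $|\mathcal{Q}|=|Aut(G)|\cdot|S(M,[G])|$; a bijection $\mathcal{P}\leftrightarrow\mathcal{Q}$ thus yields the stated formula by rearrangement.

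For the forward direction, given $(N,\phi)\in\mathcal{P}$, the regularity of $N$ on $G$ permits me to define a base-point bijection $\xi:G\to M$ by $\xi(g)=\phi^{-1}(n_g)$, where $n_g\in N$ is the unique element with $n_g(1_G)=g$. Conjugation by $\xi$ yields an isomorphism $Perm(G)\to Perm(M)$ under which $N$ is carried onto the left regular image $\rho(M)\leq Perm(M)$; this is verified by the identity $n_{n(g)}=n\cdot n_g$, which shows that $\xi n\xi^{-1}$ acts on $M$ as left multiplication by $\phi^{-1}(n)$. Because conjugation preserves both regularity and the normalizer relation, the hypothesis $\lambda(G)\leq Norm_{Perm(G)}(N)$ transports to $\xi\lambda(G)\xi^{-1}\leq Norm_{Perm(M)}(\rho(M))=Hol(M)$, so setting $U=\xi\lambda(G)\xi^{-1}$ and $\psi(g)=\xi\lambda(g)\xi^{-1}$ produces a valid pair in $\mathcal{Q}$.

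For the reverse direction, given $(U,\psi)\in\mathcal{Q}$, I would define $\xi':G\to M$ by $\xi'(g)=\psi(g)(1_M)$ and let $N=(\xi')^{-1}\rho(M)\xi'\leq Perm(G)$, which is regular and isomorphic to $M$. The key verification is the identity $(\xi')^{-1}\psi(g)\xi'=\lambda(g)$, which follows from the homomorphism property $\psi(g_0)(\psi(g)(1_M))=\psi(g_0g)(1_M)=\xi'(g_0g)$; this shows that $(\xi')^{-1}U\xi'=\lambda(G)$, so that $\lambda(G)$ really does normalize $N$. Checking that the two constructions are mutually inverse reduces to observing that both base-point bijections are determined by the same pinning $1_G\mapsto 1_M$, so the only real work is careful bookkeeping; no deeper structural difficulty is anticipated, since regularity, normalization, and isomorphism type are all conjugation-invariant.
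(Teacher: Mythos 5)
Your argument is correct and complete: encoding a subgroup together with a choice of isomorphism as a pair, transporting structure across the base-point bijections $\xi$ and $\xi'$ pinned by $1_G\mapsto 1_M$, and dividing out by $|Aut(M)|$ and $|Aut(G)|$ is precisely the standard proof of this translation formula, which the paper does not reprove but simply cites from \cite{Byott1996}. Two cosmetic points only: the subgroup you call $\rho(M)$ is the \emph{left} regular representation, usually written $\lambda(M)$, and in the reverse direction you should explicitly record the isomorphism $M\to N$ completing the pair, namely $m\mapsto(\xi')^{-1}\lambda(m)\xi'$, so that the two constructions can be checked to be mutually inverse on pairs rather than merely on subgroups.
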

This approach has advantages and disadvantages in that, while it doesn't easily yield the element of $R(G,[M])$ from an element of $S(M,[G])$, it does give the counts of one in terms of the other, where the computation of $S(M,[G])$ is feasible at the very least, by brute force using a system such as GAP. We utilize this later on to obtain some of the information in some of the tables we shall give. What is more desirable, typically, is to derive $|R(G,[M])|$ or $|S(M,[G])|$ from first principles. In our analysis, we will take a slightly different tack, by inferring that $R(G,[M])$ is empty in certain circumstances, by utilizing one of the consequences of the existence of a Hopf-Galois structure on a field extension. In the setting of a Hopf-Galois extension $K/k$ with action by a $k$-Hopf algebra $H$, one has:
\begin{theorem}
The correspondence $Fix:\{k-\text{sub-Hopf\ algebras\ of\ H}\}\rightarrow \{subfields\ k\subseteq F\subseteq K\}$ given by 
$$
Fix(H')=\{z\in K\ |\ h(z)=\epsilon(h)z\ \forall h\in H'\}
$$
(where $H'\subseteq H$) is injective and inclusion reversing.
\end{theorem}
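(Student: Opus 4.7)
The plan is to verify the three assertions in the statement --- that $Fix(H')$ is a subfield containing $k$, that the correspondence is inclusion-reversing, and that it is injective --- in increasing order of difficulty. Inclusion-reversal is immediate from the definition: if $H_1' \subseteq H_2'$, then any $z \in Fix(H_2')$ satisfies $\mu(h)(z) = \epsilon(h) z$ for every $h \in H_1' \subseteq H_2'$, so $Fix(H_2') \subseteq Fix(H_1')$. Containment $k \subseteq Fix(H')$ follows because $\mu(h)$ acts on scalars as multiplication by $\epsilon(h)$. Closure of $Fix(H')$ under sums is automatic; closure under products uses the first axiom of a Hopf-Galois action, which for $z,w \in Fix(H')$ gives
\begin{equation*}
\mu(h)(zw) \;=\; \sum_{(h)} \mu(h_{(1)})(z)\, \mu(h_{(2)})(w) \;=\; \sum_{(h)} \epsilon(h_{(1)})\,\epsilon(h_{(2)})\, zw \;=\; \epsilon(h)\, zw ,
\end{equation*}
and closure under inverses follows from the antipode of the sub-Hopf algebra $H'$.

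For injectivity, my first approach is a dimension count. For any sub-Hopf algebra $H'$ with $F = Fix(H')$, the extension $K/F$ should itself be Hopf-Galois with respect to an $F$-Hopf algebra naturally built from $H'$ (essentially $F \otimes_k H'$), so that the third axiom in the definition of Hopf-Galois forces $\dim_k H' = [K:F]$. In particular, the fixed field determines the $k$-dimension of $H'$. If $H_1' \subseteq H_2'$ share the same fixed field $F$, the equality $\dim_k H_1' = [K:F] = \dim_k H_2'$ forces $H_1' = H_2'$; the general case reduces to this by replacing $H_2'$ with the sub-Hopf algebra generated by $H_1'$ and $H_2'$, whose fixed field is $Fix(H_1') \cap Fix(H_2') = F$ since $Fix$ converts joins of sub-Hopf algebras into intersections of subfields.

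I expect the main obstacle to be the verification that $K/F$ is Hopf-Galois for the base-changed algebra $F \otimes_k H'$. Within the Greither-Pareigis framework in play throughout the paper this obstruction can be sidestepped combinatorially: the sub-Hopf algebras of $H = (K[N])^G$ are in natural bijection with the $\lambda(G)$-stable subgroups $N' \leq N$ via $N' \mapsto (K[N'])^G$, and $Fix((K[N'])^G)$ coincides with the fixed field in $K/k$ of the subgroup of $G$ determined by the orbit $N' \cdot 1_G \subseteq G$ under the ordinary Galois correspondence. Because $N$ acts regularly on $G$, distinct $\lambda(G)$-stable subgroups $N'$ produce distinct orbits $N' \cdot 1_G$, hence distinct subfields of $K$, yielding injectivity directly.
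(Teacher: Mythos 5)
The paper does not actually prove this statement: it is quoted as standard background from Chase--Sweedler and Greither--Pareigis (the surrounding text supplies only citations), so there is no in-paper argument to match yours against and I am judging the proposal on its own terms. Your inclusion-reversal and measuring computations are fine. Of your two injectivity arguments, the one that actually closes is the second, combinatorial one, and it is essentially the mechanism the paper itself develops immediately afterward: the classification of sub-Hopf algebras of $(K[N])^G$ as $(K[N'])^G$ for $\lambda(G)$-stable $N'\leq N$, plus the identification $Fix((K[N'])^G)=K^J$ with $J=N'\cdot 1_G$, are precisely the paper's subsequent Proposition and $\Psi$-theorem (cited to Chase--Sweedler, Greither--Pareigis, and TARP-SES). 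Given those two facts, your observation that regularity of $N$ makes $N'\mapsto N'\cdot 1_G$ injective, and that distinct subgroups of $G$ have distinct fixed fields by classical Galois theory, does yield injectivity; the cost is that this only covers the Greither--Pareigis setting (normal separable $K/k$ with $H=(K[N])^G$), which is all the paper uses, whereas the theorem as stated is for an arbitrary Hopf--Galois extension.

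Two genuine soft spots remain. First, your dimension-count route hinges on $\dim_k H'=[K:Fix(H')]$, i.e.\ on $K/Fix(H')$ being $Fix(H')\otimes_k H'$-Galois; you correctly flag this as the obstacle, but it is not a technicality to be sidestepped --- it \emph{is} the theorem of Chase--Sweedler (their faithfully-flat descent/normal-basis argument), so as written that branch of the proof is incomplete. Second, the claim that closure of $Fix(H')$ under inverses ``follows from the antipode'' is not right: the antipode acts on $H'$, not on $K$, and gives you nothing about $z^{-1}$ for $z\in Fix(H')$. The correct (and easier) argument is that $Fix(H')$ is a $k$-subalgebra of the finite extension $K$, hence an integral domain of finite $k$-dimension, hence a field. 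With those two repairs the combinatorial version of your argument is a serviceable proof of the statement in the generality the paper needs.
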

From Chase and Sweedler \cite{ChaseSweedler1969}, and extrapolated in Greither-Pareigis, and in \cite[Prop 2.2]{TARP-SES} we have:\par
\begin{proposition}
For a normal extension $K/k$ with $G=Gal(K/k)$ which is Hopf-Galois with respect to the action of $H_N=(K[N])^{G}$ the sub-Hopf algebras of $H_N$ are of the form $H_P=(K[P])^G$ where $P$ is any $G$-invariant subgroup of $N$.
\end{proposition}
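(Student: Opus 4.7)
The plan is to pass to the $K$-Hopf algebra $K\otimes_k H_N$, classify its sub-Hopf algebras via group-like elements, and then descend back to $k$ along the Galois extension $K/k$. Because $K/k$ is Galois with group $G$ and $H_N=(K[N])^G$ by construction, faithfully flat (Galois) descent yields a natural isomorphism of $K$-Hopf algebras $K\otimes_k H_N\cong K[N]$, together with an inclusion-preserving bijection between sub-$k$-Hopf algebras $H'\leq H_N$ and $G$-stable sub-$K$-Hopf algebras $\widetilde{H}\leq K[N]$, sending $H'\mapsto K\otimes_k H'$ with inverse $\widetilde{H}\mapsto \widetilde{H}^G$. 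This reduces the problem to identifying the $G$-stable sub-$K$-Hopf algebras of the group algebra $K[N]$.

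Next, I would invoke the classical structure theory for Hopf subalgebras of a finite group algebra: the group-like elements of $K[N]$ are precisely the elements of $N$, so any sub-$K$-Hopf algebra $\widetilde{H}\leq K[N]$ has group-likes $P=\mathcal{G}(\widetilde{H})$ forming a subgroup of $N$. Because $K/k$ is separable, $\mathrm{char}(K)\nmid |N|$, so $K[N]$ and each $\widetilde{H}$ are semisimple cocommutative Hopf algebras over $K$; the Cartier--Gabriel--Kostant-style decomposition (or direct verification via semisimplicity and pointedness) then forces $\widetilde{H}$ to be spanned by its group-likes, whence $\widetilde{H}=K[P]$.

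Finally, the condition that $K[P]$ be $G$-stable inside $K[N]$ translates directly to $P$ being $G$-invariant as a subset of $N$: the $G$-action on $K[N]$ is semi-linear, acting as the Galois action on scalars and permuting the canonical basis $N$ by the conjugation $\lambda(g)\,n\,\lambda(g)^{-1}$, which is well-defined precisely because $\lambda(G)\leq\mathrm{Norm}_B(N)$. Hence a $K$-subspace spanned by a subset of $N$ is $G$-stable exactly when that subset is closed under conjugation by $\lambda(G)$. Combining the three steps yields $H'=\widetilde{H}^G=(K[P])^G$ for a $G$-invariant subgroup $P\leq N$, as asserted. The main obstacle is the middle step: pinning down that every sub-$K$-Hopf algebra of $K[N]$ is a group subalgebra. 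The descent and the basis-permutation arguments are formal, but this middle assertion relies on genuine Hopf algebra structure theory (pointedness of group algebras, the fact that group-likes of a Hopf subalgebra form a subgroup, and semisimplicity) and is where a careful write-up should concentrate its effort.
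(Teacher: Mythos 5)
The paper itself offers no proof of this proposition; it simply cites Chase--Sweedler, Greither--Pareigis, and Prop.~2.2 of the TARP-SES reference, and the argument in those sources is precisely the descent argument you outline: $K\otimes_k H_N\cong K[N]$ by Galois descent, sub-$k$-Hopf algebras of $H_N$ correspond to $G$-stable sub-$K$-Hopf algebras of $K[N]$, and these are the $K[P]$ for subgroups $P\leq N$ closed under the conjugation action $n\mapsto \lambda(g)n\lambda(g)^{-1}$. So your overall route is the standard one, and the first and third steps are correct as written.

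One justification in your middle step is genuinely wrong, although the conclusion it is meant to support is still true. Separability of $K/k$ does \emph{not} imply $\mathrm{char}(K)\nmid |N|$: an Artin--Schreier extension of $\mathbb{F}_p(t)$ of degree $p$ is separable (indeed Galois) with $|N|=p=\mathrm{char}(K)$, and there $K[N]$ is not semisimple as an algebra, so the Maschke-type semisimplicity you invoke is unavailable. The correct, characteristic-free reason that every sub-$K$-Hopf algebra of $K[N]$ is a group algebra is \emph{cosemisimplicity of the coalgebra} $K[N]$: as a coalgebra it is the direct sum of the one-dimensional simple subcoalgebras $Kn$ for $n\in N$, so any subcoalgebra is the span of the group-like elements it contains (by linear independence of distinct group-likes), and requiring it in addition to be a unital subalgebra stable under the antipode forces that set of group-likes to be a subgroup $P$. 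With that substitution your write-up is a complete and correct proof, matching the cited sources in structure.
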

And as any intermediate field between $k$ and $K$ corresponds to a subgroup $J\leq G$, where $Fix(H_P)=F=K^J$, one has a modification of the aforementioned Galois correspondence.
The following is basically \cite[Thm. 2.4, Cor. 2.5 and 2.6]{TARP-SES}.\par
\begin{theorem}
The correspondence 
$$
\Psi:\{\text{subgroups}\ of\ N\ \text{normalized by} \lG\}\longrightarrow\{\text{subgroups of}\ G\}
$$
given by
$$\Psi(P)=Orb_{P}(i_G)=\{q(i_G)\ |\ q\in P\}=J$$
is injective and $K^{H_P}=F=K^J$.
\end{theorem}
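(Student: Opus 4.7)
The plan is to verify, in sequence, that $\Psi(P)$ actually lands in the set of subgroups of $G$, that $\Psi$ is injective, and that the fixed-field equality $K^{H_P}=K^{\Psi(P)}$ holds.

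To see that $J:=\Psi(P)$ is a subgroup, I would exploit the hypothesis that $\lG$ normalizes $P$: for each $g\in G$ and each $p\in P$ there is a $\widetilde p\in P$ with $\lg\,p=\widetilde p\,\lg$. Evaluating both sides at $i_G$ yields the identity $g\cdot p(i_G)=\widetilde p(g)$. Thus if $g_1=p_1(i_G)$ and $g_2=p_2(i_G)$ are in $J$, applying this identity with $g=g_1$ and $p=p_2$ gives $g_1 g_2=\widetilde p_2(p_1(i_G))=(\widetilde p_2 p_1)(i_G)\in J$. Since $i_G\in J$ (taking $p=1_P$) and $G$ is finite, $J$ is a subgroup. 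For injectivity, the regularity of $N$ makes the orbit map $\phi:N\to G$, $n\mapsto n(i_G)$, a bijection; since $\Psi(P)=\phi(P)$, any equality $\Psi(P_1)=\Psi(P_2)$ forces $P_1=P_2$.

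For the fixed-field statement I would proceed by a dimension-plus-inclusion argument. The sub-Hopf algebra $H_P=(K[P])^G$ has $k$-dimension $|P|$ by standard Galois descent on $K[P]$, so $[K:K^{H_P}]=|P|$; meanwhile $|J|=|P\cdot i_G|=|P|$ since $P\leq N$ inherits the fixed-point-free action of $N$ on $G$. By the injective sub-Hopf-algebra-to-subfield correspondence (the previous theorem) there exists some $J'\leq G$ with $K^{H_P}=K^{J'}$, and so $|J'|=|P|=|J|$. It therefore suffices to prove a single inclusion, for example $K^J\subseteq K^{H_P}$; equivalently, that every element of $K$ fixed pointwise by $J$ is annihilated (in the sense $h(a)=\epsilon(h)a$) by every $h\in H_P$.

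Establishing that last inclusion is where I expect the real work to sit. Writing a generic element of $H_P$ in the form $\sum_{q\in P}c_q\, q$ with $c_q\in K$ satisfying the $G$-invariance $g(c_q)=c_{\lg q\lgi}$, one unfolds the action $\mu:H_N\to End_k(K)$ restricted to $H_P$ and checks, element by element, that its equalizer coincides with $K^J$. The main obstacle is precisely the need to reconcile the two roles played by each $q\in P$ — as a permutation of $G$ (which defines $J$) and as a group-like generator in the Hopf algebra (which defines $H_P$) — and to verify that the $G$-invariance carried by the coefficients $c_q$ is exactly what forces these two fixed loci to match. Once this matching is made explicit, the counts of $J$ and $J'$ coincide, equality follows, and both the injectivity of $\Psi$ and the Galois-correspondence statement $K^{H_P}=F=K^J$ are delivered in one stroke.
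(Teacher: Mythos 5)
The paper supplies no proof of this theorem at all --- it is stated as ``basically'' Thm.~2.4 and Cor.~2.5--2.6 of the cited reference --- so your attempt has to stand on its own. Your first two steps do: evaluating the normalization relation $\lambda(g)\,p=\widetilde{p}\,\lambda(g)$ at $i_G$ gives $g\cdot p(i_G)=\widetilde{p}(g)$, which yields closure of $J$ under multiplication (and $J$ contains $i_G$, so finiteness finishes it), and injectivity is immediate from the bijectivity of the orbit map $n\mapsto n(i_G)$ forced by regularity. The reduction of the fixed-field claim to a single inclusion, via $\dim_k H_P=|P|=|J|$ together with classical Galois theory producing some $J'$ with $K^{H_P}=K^{J'}$ and $|J'|=|J|$, is also a legitimate architecture (granting the nontrivial but standard fact that $[K:K^{H'}]=\dim_k H'$ for a sub-Hopf algebra of a Hopf-Galois structure).

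The genuine gap is that the decisive inclusion $K^J\subseteq K^{H_P}$ is never actually established: you describe what one \emph{would} do and explicitly defer it as ``where the real work sits,'' but that inclusion is the entire content linking the Hopf-algebra side to the subgroup side, so the proof cannot stop there. The missing ingredient is the explicit descent formula for the Greither--Pareigis action: for $h=\sum_{\eta\in P}c_\eta\,\eta\in H_P$ one has $h(a)=\sum_{\eta}c_\eta\,\bigl(\eta^{-1}(i_G)\bigr)(a)$, where $\eta^{-1}(i_G)\in G$ acts as a Galois automorphism, and $\epsilon(h)=\sum_\eta c_\eta$. Since $P$ is a group, $J=\{q(i_G):q\in P\}=\{q^{-1}(i_G):q\in P\}$, so for $a\in K^J$ every term $\bigl(\eta^{-1}(i_G)\bigr)(a)$ equals $a$ and $h(a)=\epsilon(h)a$ follows in one line; contrary to your expectation, no analysis of the $G$-invariance condition $g(c_\eta)=c_{\lambda(g)\eta\lambda(g)^{-1}}$ on the coefficients is needed for this direction (that condition only matters for the reverse inclusion, which your dimension count makes unnecessary). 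With that formula inserted your argument closes; without it, the proof is incomplete at exactly its central point.
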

We note that $J$ {\it is} a subgroup of $G$ and also that $|P|=[K:F]=|J|$. We observe that if $P$ is a characteristic subgroup of $N$ then it is automatically normalized by $\lG$, and, as mentioned above $|\Psi(P)|=|P|$. As such, since $|N|=|G|$ by regularity, if $m\big| |G|$ we let 
\begin{align*}
Sub_m(G)&=\{\text{subgroups of $G$ of order $m$}\}\\
CharSub_m(N)&=\{\text{characteristic subgroups of $N$ of order $m$}\}\\
\end{align*}
and thus we have an injective correspondence 
$$\Psi:CharSub_m(N)\rightarrow Sub_m(G)$$
for each $m\big| |G|$ so that $|CharSub_m(N)|\leq |Sub_m(G)|$.\par
The question we consider is, for a given $N$ where $N\cong M$, can we discern whether  $|CharSub_m(N)|> |Sub_m(G)|$ for at least one $m$, in which case one must conclude that $R(G,[M])=\O$? What is seemingly unlikely about this approach yielding anything is that one expects the class of characteristic subgroups to be somewhat meager, certainly in comparison to the collection of all subgroups. But, for those of a given order $m$ dividing $|G|$ this actually happens relatively often. We start with the first class of examples where this analysis applies. The 5 groups of order 12 are $Q_3 ,C_{12}, A_4,D_6$, and $C_6\times C_2$ and by direct computation we find three pairings $R(G,[M])$ which are empty by this criterion.
\begin{align*}
(G,[M]) &= (A_4,Q_3) \rightarrow &|Sub_6(G)|=0\text{ and }|CharSub_6(M)|=1\\
(G,[M]) &= (A_4,C_{12}) \rightarrow &|Sub_6(G)|=0\text{ and }|CharSub_6(M)|=1\\
(G,[M]) &= (A_4,D_6) \rightarrow &|Sub_6(G)|=0\text{ and }|CharSub_6(M)|=1\\
\end{align*}
which is a modest set of examples, but representative of some basic motifs which we'll explore in more detail presently. Examining the full table of $|R(G,[M])|$ we see where these fit in, and also observe the two other empty pairings.\par\vskip0.125in
%\centerline{
\scalebox{0.9}{
\begin{tabular}{|c|c|c|c|c|c|}\hline
$G\downarrow$ \ $M\rightarrow$ & $Q_3$ & $C_{12}$ & $A_4$ & $D_6$ & $C_6\times C_2$ \\ \hline
$Q_3$ & 2& 3& 12& 2& 3\\ \hline
$C_{12}$ & 2& 1& 0& 2& 1\\ \hline
$A_4$ &\cellcolor{Gray} 0& \cellcolor{Gray}0& 10& \cellcolor{Gray}0& 4\\ \hline
$D_6$ & 14& 9& 0& 14& 3\\ \hline
$C_6\times C_2$ & 6& 3& 4& 6& 1\\ \hline
\end{tabular}}\par
We highlight the fact that for $G=A_4$ and $M=Q_3, D_6,\text{ and }C_{12}$ that $|Sub_6(G)|=0$ and $|CharSub_6(M)|=1$.\par
That is, $G$ has no-subgroup of index 2, which is a basic exercise in group theory, and $Q_3, D_6,\text{ and }C_6\times C_2$ have unique (hence characteristic) subgroups of index 2. As it turns out, examples like this are quite common instances of the $|CharSub_m(N)|>|Sub_m(G)|$ condition. 
\section{Index Two Subgroups}
Following Nganou \cite{Nganou} we can apply some basic, yet very useful, group theory facts to examine the index 2 subgroups of a given group.
\begin{theorem}[\cite{Nganou}]
For a finite group $G$, where $n=|G|$, the subgroup $G^2=\langle\{g^2\ |\ g\in G\}\rangle$ is such that
$$
|Sub_{n/2}(G)|=|Sub_{n/2}(G/G^2)|
$$
where, since $[G,G]\subseteq G^2$, $G/G^2$ is an elementary Abelian group of order $2^m$. Moreover, $|Sub_{n/2}(G/G^2)|=2^m-1$ since the index 2 subgroups correspond to hyperplanes in the finite vector space $G/G^2$.\par
\end{theorem}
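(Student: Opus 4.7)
The plan is to show that every index 2 subgroup of $G$ contains $G^2$, use the correspondence theorem to transfer the counting question to the quotient $G/G^2$, and then count hyperplanes in an $\mathbb{F}_2$-vector space.

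First, I would observe that any subgroup $H \leq G$ of index 2 is normal with $G/H \cong C_2$. For any $g \in G$, the coset $gH$ has order dividing 2, so $(gH)^2 = g^2 H = H$, whence $g^2 \in H$. Since this holds for every $g \in G$, the entire generating set $\{g^2\ |\ g \in G\}$ of $G^2$ lies in $H$, and therefore $G^2 \leq H$. This is the crux of the argument, and the only place where the index 2 hypothesis enters directly.

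Next, I would invoke the correspondence theorem: subgroups of $G$ containing $G^2$ correspond bijectively, in an index-preserving manner, with subgroups of $G/G^2$. Combining this with the previous step yields a bijection $Sub_{n/2}(G) \longleftrightarrow Sub_{n/2}(G/G^2)$, establishing the first equality of the statement. To identify the structure of the quotient, note that every element of $G/G^2$ squares to the identity by construction, so $G/G^2$ has exponent at most $2$; a standard calculation ($(xy)^2 = e$ together with $x^2 = y^2 = e$ forces $xy = yx$) then shows that any such group is abelian, hence $G/G^2$ is elementary abelian of order $2^m$ for some $m$, which in particular gives $[G,G] \leq G^2$.

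Finally, viewing $G/G^2$ as an $\mathbb{F}_2$-vector space of dimension $m$, the index 2 subgroups are exactly the hyperplanes (codimension 1 subspaces). These are parameterized by nonzero linear functionals modulo $\mathbb{F}_2^\times$, and since $\mathbb{F}_2^\times$ is trivial the count is $2^m - 1$. There is no serious technical obstacle here: the only nontrivial move is recognizing that $G^2 \leq H$ for every index 2 subgroup $H$, after which the result follows from the correspondence theorem and elementary linear algebra over $\mathbb{F}_2$.
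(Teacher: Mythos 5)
Your proof is correct and is exactly the standard argument underlying this result; the paper itself gives no proof, simply citing Nganou, and your three steps (every index $2$ subgroup contains $G^2$ since the quotient is $C_2$; the index-preserving correspondence with subgroups of $G/G^2$; and the hyperplane count $2^m-1$ over $\mathbb{F}_2$) are precisely what that reference does. Nothing is missing.
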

i.e. $|Sub_{n/2}(G)|=[G:G^2]-1$. As a corollary to this, he also notes:
\begin{corollary}
If $G$ is a finite group then $G$ has no index 2 subgroups iff $[G:G^2]=1$ iff $G$ is generated by squares. And $G$ has a unique index 2 subgroup iff $[G:G^2]=2$.
\end{corollary}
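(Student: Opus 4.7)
The plan is to derive this corollary as an immediate consequence of the preceding theorem, which asserts that $|Sub_{n/2}(G)|=[G:G^2]-1$. Both claims in the corollary are purely arithmetic translations of this formula, so the proof is essentially a matter of unpacking definitions and checking two cases.

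For the first equivalence, I would begin by noting that $G$ has no index 2 subgroup precisely when $|Sub_{n/2}(G)|=0$. By the theorem, this equality holds if and only if $[G:G^2]-1=0$, i.e.\ $[G:G^2]=1$, which is the statement $G=G^2$. Since $G^2$ is by definition the subgroup generated by all squares in $G$, the condition $G=G^2$ is literally the statement that $G$ is generated by squares. Chaining the three equivalences yields the first sentence of the corollary.

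For the second claim, I would observe that $G$ has a unique index 2 subgroup exactly when $|Sub_{n/2}(G)|=1$. Again invoking the theorem, this is equivalent to $[G:G^2]-1=1$, i.e.\ $[G:G^2]=2$, as required. Here it may be worth a brief remark connecting this back to the hyperplane interpretation already mentioned: when $[G:G^2]=2$, the elementary abelian $2$-group $G/G^2$ is one-dimensional, so it has a unique hyperplane, namely the trivial one, whose preimage in $G$ is the unique index 2 subgroup.

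The main thing to verify is simply that no extra hypothesis is being smuggled in: the theorem is stated for arbitrary finite $G$, so both parts of the corollary hold with no restrictions. There is no genuine obstacle; the only mild subtlety is the observation that $G^2$ is always a normal subgroup (being characteristic, since it is generated by a set closed under automorphisms) and contains $[G,G]$, which is what legitimizes passing to the quotient $G/G^2$ in the first place — but this is already granted by the theorem's statement and needs no repetition.
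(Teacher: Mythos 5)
Your proposal is correct and matches the paper's intent exactly: the paper offers no separate proof, treating the corollary as an immediate consequence of the identity $|Sub_{n/2}(G)|=[G:G^2]-1$ stated just before it, which is precisely the unpacking you carry out. No issues.
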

And indeed, $A_4$ has no index 2 subgroups since it is generated by squares since every three cycle is the square of its inverse. There are other examples of even order groups without index 2 subgroups. In degree 24, let $G=SL_2(\mathbb{F}_3)$. There are 15 groups $M$ of order $24$, of which 12 have the property that $|CharSub_{12}(M)|>0$.\par
\noindent If $M=C_{3}\rtimes C_{8},C_{24},S_4,C_{2}\times A_{4}$ then $|Sub_{12}(M)|=1$ so $|CharSub_{12}(M)|=1$.\par
\noindent If $M=C_{3}\rtimes Q_{2},D_{12},C_2\times (C_{3}\rtimes C_{4}),C_{12}\times C_{2},C_{3}\times D_{4}$ then $|Sub_{12}(M)|=3$ and $|CharSub_{12}(M)|=1$.\par
\noindent If $M=C_{4}\times S_{3},(C_{6}\times C_{2})\rtimes C_{2}$ then $|CharSub_{12}(M)|=3$.\par
\noindent If $M=C_{2}\times C_{2}\times S_{3}$ then $|Sub_{12}(M)|=7$ and $|CharSub_{12}(M)|=1$.\par
In fact, there are only 3 non-empty $R(SL_2(\mathbb{F}_3),[M])$, namely $M=SL_2(\mathbb{F}_3)$, $C_3\times Q_2$ and $C_6\times C_2\times C_2$. Of course, not all the cases where the pairing is empty correspond to $M$ having a unique subgroup of index 2. Nonetheless, the number of characteristic subgroups of $M$ of index 2 is larger than the number of index 2 subgroups of $G$.
We present the full table of $|R(G,[M])|$ values, highlighting those determined to be zero via this criterion. As it turns out, all of the cases where $|CharSub_{m}(M)|>|Sub_{m}(G)|$ occur when $m=12$. \par\newpage
\hskip-1.0in\scalebox{0.9}{\begin{tabular}{|c|c|c|c|c|c|c|c|c|}\hline
$G\downarrow$ \ $M\rightarrow$ & $C_{3} \rtimes C_{8}$& $C_{24}$& $SL_{2}(\mathbb{F}_3)$& $C_{3} \rtimes Q_{2}$& $C_{4} \times S_{3}$& $D_{12}$& $C_{2} \times (C_{3} \rtimes C_{4})$ \\ \hline
$C_{3} \rtimes C_{8}$ & 4 & 6 & 24 & 4 & \cellcolor{Gray} 0 & 4 & 0 \\ \hline
$C_{24}$ & 4 & 2 & 0 & 4 & \cellcolor{Gray} 0 & 4 & 0 \\ \hline
$SL_2(\mathbb{F}_3)$ & \cellcolor{Gray} 0 & \cellcolor{Gray} 0 & 10 & \cellcolor{Gray} 0 & \cellcolor{Gray} 0 & \cellcolor{Gray}0 & \cellcolor{Gray} 0 \\ \hline
$C_{3} \rtimes Q_{2}$ & 28 & 18 & 0 & 28 & 56 & 28 & 28 \\ \hline
$C_{4}   \times S_{3}$ & 16 & 12 & 0 & 28 & 56 & 28 & 52\\ \hline
$D_{12}$ & 4 & 6 & 0 & 28 & 56 & 28 & 76\\ \hline
$C_{2}   \times (C_{3} \rtimes C_{4})$ & 24 & 12 & 0 & 28 & 56 & 28 & 36\\ \hline
$(C_{6}   \times C_{2}) \rtimes C_{2}$ & 12 & 6 & 0 & 28 & 56 & 28 & 60\\ \hline
$C_{12}   \times C_{2}$ & 8 & 4 & 0 & 12 & 24 & 12 & 20 \\ \hline
$C_{3}   \times D_{4}$ & 4 & 2 & 0 & 12 & 24 & 12 & 28 \\ \hline
$C_{3}   \times Q_{2}$ & 12 & 6 & 16 & 12 & 24 & 12 & 12\\ \hline
$S_{4}$ & 0 & 0 & 0 & 0 & \cellcolor{Gray} 0 & 0 & 0 \\ \hline
$C_{2}   \times A_{4}$ & 0 & 0 & 0 & 0 & \cellcolor{Gray} 0 & 0 & 0 \\ \hline
$C_{2}   \times C_{2}   \times S_{3}$ & 0 & 0 & 0 & 228 & 456 & 228 & 228\\ \hline
$C_{6}   \times C_{2}  \times  C_{2}$ & 0 & 0 & 0 & 84 & 168 & 84 & 84\\ \hline
\end{tabular}}\par\vskip0.25in
\hskip-1.0in\scalebox{0.9}{\begin{tabular}{|c|c|c|c|c|c|c|c|c|c|}\hline
$G\downarrow$ \ $M\rightarrow$ & $(C_{6} \times C_{2}) \rtimes C_{2}$& $C_{12} \times C_{2}$& $C_{3} \times D_{4}$& $C_{3} \times Q_{2}$& $S_{4}$& $C_{2} \times A_{4}$& $C_{2} \times C_{2} \times S_{3}$& $C_{6} \times C_{2} \times C_{2}$\\ \hline
$C_{3} \rtimes C_{8}$ & \cellcolor{Gray} 0 & 0 & 6 & 6 & 0 & 0 & 0 & 0\\ \hline
$C_{24}$ & \cellcolor{Gray} 0 & 0 & 2 & 2 & 0 & 0 & 0 & 0\\ \hline
$SL_2(\mathbb{F}_3)$ & \cellcolor{Gray} 0 & \cellcolor{Gray} 0 & \cellcolor{Gray} 0 & 8 & \cellcolor{Gray} 0 & \cellcolor{Gray} 0 & \cellcolor{Gray} 0 & 8\\ \hline
$C_{3} \rtimes Q_{2}$ & 56 & 18 & 18 & 6 & 0 & 0 & 28 & 6\\ \hline
$C_{4} \times S_{3}$ & 56 & 30 & 18 & 6 & 24 & 0 & 40 & 12\\ \hline
$D_{12}$ & 56 & 42 & 18 & 6 & 0 & 0 & 52 & 18\\ \hline
$C_{2} \times (C_{3} \rtimes C_{4})$ & 56 & 30 & 18 & 6 & 0 & 48 & 32 & 12\\ \hline
$(C_{6} \times C_{2}) \rtimes C_{2}$ & 56 & 42 & 18 & 6 & 24 & 48 & 44 & 18\\ \hline
$C_{12}\times C_{2}$ & 24 & 10 & 6 & 2 & 0 & 0 & 16 & 4\\ \hline
$C_{3} \times D_{4}$ & 24 & 14 & 6 & 2 & 16 & 0 & 20 & 6\\ \hline
$C_{3}  \times Q_{2}$ & 24 & 6 & 6 & 2 & 0 & 0 & 12 & 2\\ \hline
$S_{4}$ & \cellcolor{Gray} 0 & 0 & 0 & 0 & 8 & 36 & 24 & 48\\ \hline
$C_{2}   \times A_{4}$ & \cellcolor{Gray} 0 & 0 & 0 & 8 & 12 & 16 & 8 & 8\\ \hline
$C_{2}   \times C_{2}  \times  S_{3}$ & 456 & 126 & 126 & 42 & 48 & 0 & 152 & 24\\ \hline
$C_{6}   \times C_{2}   \times C_{2}$ & 168 & 42 & 42 & 14 & 0 & 112 & 56 & 8\\ \hline
\end{tabular}}

\par\newpage

We note that there are total of $76$ different $(G,[M])$ for which $R(G,[M])=\O$, of which this method predicted $20$. As an interesting aside, one {\it can} find extensions $K/\mathbb{Q}$ where $Gal(K/\mathbb{Q})\cong SL_{2}(\mathbb{F}_3)$. 
For example, Heider and Kolvenbach \cite{HeiderKolvenbach}, found that the splitting field of 
$$
f(x)=x^8+9x^6+23x^4+14x^2+1\in\mathbb{Z}[x]
$$
is one such $SL_{2}(\mathbb{F}_3)$ Galois extension.\par
We use the notation 
$$
I_2(G)=[G:G^2]-1
$$
for the number of index 2 subgroups, as given in Crawford and Wallace \cite{CrawfordWallace} who, using Goursat's theorem, present a number of basic facts, namely
\begin{itemize}
\item $I_2(G_1\times G_2)=I_2(G_1)I_2(G_2)+I_2(G_1)+I_2(G_2)$
\item If $I_2(G)>0$ then $I_2(G)\equiv 1\text{, or }3\ (mod\ 6)$
\end{itemize}
Nganou also shows this by observing that $(G_1\times G_2)^2=G_1^2\times G_2^2$ and therefore that $[G_1\times G_2:(G_1\times G_2)^2]=[G_1:G_1^2][G_2:G_2^2]$, and also that if $|G|$ is odd then $I_2(G)=0$ automatically. In actuality, the full machinery of Goursat's theorem, which is used to count subgroups of arbitrary direct products, is not needed since, for subgroups of index 2, and later on index $p$, it's straightforward to enumerate the subgroups via the subgroup indices. Some examples of this were seen in the degree 24 examples earlier, such as
\begin{align*}
I_2(C_{2}\times A_{4})&=I_2(C_{2})I_2(A_{4})+I_2(C_{2})+I_2(A_{4})&=1\cdot 0+0+1=1\\
I_2(C_{12}\times C_{2})&=I_2(C_{12})I_2(C_{2})+I_2(C_{12})+I_2(C_{2})&=1\cdot 1+1+1=3\\
I_2(C_{3}\times D_{4})&=I_2(C_{3})I_2(D_{4})+I_2(C_{3})+I_2(D_{4})&=0\cdot 3+0+3=3\\
I_2(C_{4}\times S_{3})&=I_2(C_{4})I_2(S_{3})+I_2(C_{4})+I_2(S_{3})&=1\cdot 1+1+1=3\\
\end{align*}
What is most interesting about the formula 
$$I_2(G_1\times G_2)=I_2(G_1)I_2(G_2)+I_2(G_1)+I_2(G_2)$$
is that it allows us to readily generate examples of (even order) groups with $0$ or $1$ index two subgroups given that, without loss of generality, $I_2(G_1)=0$ and $I_2(G_2)=0$ or $1$ for then $I_2(G_1\times G_2)=0$ or $1$ as well. If $I_2(G_1)=0$ and $I_2(G_2)=0$ then, of course, $I_2(G_1\times G_2)=0$. If $G_1$ has odd order then $I_2(G_1)=0$ so if either $G_1$ has odd order and $G_2$ even, or both $G_1$ and $G_2$ are even, with $I_2(G_1)=I_2(G_2)=0$ as in the table below, then $I_2(G_1\times G_2)=0$.\par
\begin{itemize}
\item $A_4$
\item $SL_{2}(\mathbb{F}_3)$
\item $(C_{2}\times C_{2})\rtimes C_{9}$
\item $(C_{4}\times C_{4})\rtimes C_{3}$
\item $C_{2}^4\rtimes C_{3}$
\item $C_{2}^3\rtimes C_{7}$
\item $C_{2}^4\rtimes C_{5}$
\item any non-Abelian simple group
\end{itemize}

If $I_2(G_1)=0$ and $I_2(G_2)=1$ then $I_2(G_1\times G_2)=1$.\par
For example:
\begin{multicols}{2}
$G_1$
\begin{itemize}
\item $C_r$ for $r$ odd
\item $A_4$
\item $SL_{2}(\mathbb{F}_3)$
\item $(C_{2}\times C_{2})\rtimes C_{9}$
\item $(C_{4}\times C_{4})\rtimes C_{3}$
\item $C_{2}^4\rtimes C_{3}$
\item $C_{2}^3\rtimes C_{7}$
\item $C_{2}^4\rtimes C_{5}$
\item any non-Abelian simple group
\end{itemize}
\columnbreak
$G_2$
\begin{itemize}
\item $C_s$ for $s$ even
\item $S_n$ for $n\geq 3$
\item $D_{n}$ for $n$ odd
\item $C_3\rtimes C_4$
\item $(C_{3}\times C_{3})\rtimes C_{2}$
\item the non-split extension of $SL_{2}(\mathbb{F}_3)$ by $C_{2}$ (AKA the non-split extension of $C_{2}$ by $S_{4}$)
\end{itemize}
\end{multicols}

The formula for computing $I_2$ of a direct product of two groups can be generalized to a direct product of any number of groups. For example, in degree 36
\begin{align*}
I_2(C_{3}\times C_{3}\times C_{4})&=I_2(C_{3})I_2(C_{3}\times C_{4})+I_2(C_{3})+I_2(C_{3}\times C_{4})\\
                                &=0\cdot 1+0+1\\ 
                                &=1
\end{align*}
which is in agreement with the computation done directly by $[M:M^2]-1$. Note: If we expand out $I_2(G_1\times G_2\times G_3)$ then we find that
\begin{align*}
I_2(G_1\times &G_2\times G_3)\\
=&e_1(I_2(G_1),I_2(G_2),I_3(G_3))+e_2(I_2(G_1),I_2(G_2),I_3(G_3))+\\
&e_3(I_2(G_1),I_2(G_2),I_3(G_3))\\
=&I_2(G_1)+I_2(G_2)+I_2(G_3)+I_2(G_1)I_2(G_2)+I_2(G_1)I_2(G_3)+I_2(G_2)I_2(G_3)+\\
&I_2(G_1)I_2(G_2)I_2(G_3)\\
\end{align*}
Also, it's not hard to prove that this 'product formula' for $I_2(G_1\times G_2)$ holds for semi-direct products of cyclic groups.
\begin{proposition} If $C_r$ and $C_s$ are cyclic groups then 
$$(C_r\rtimes C_s)^2=C_r^2\rtimes C_s^2$$
and therefore that
\begin{align*}
[C_r\rtimes C_s:(C_r\rtimes C_s)^2]&=[C_r:C_r^2][C_s:C_s^2]\\
I_2(C_{r}\rtimes C_{s})&=I_2(C_{r})I_2(C_{s})+I_2(C_{r})+I_2(C_{s})\\
\end{align*}
\end{proposition}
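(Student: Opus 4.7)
The plan is to prove the identity $(C_r\rtimes C_s)^2=C_r^2\rtimes C_s^2$ first; the two displayed equalities will then drop out by a quick order count. Write $G=C_r\rtimes C_s=\langle a\rangle\rtimes\langle b\rangle$ and let the action be given by $bab^{-1}=a^c$ with $\gcd(c,r)=1$. Before doing anything else I would note that $C_r^2\rtimes C_s^2$ really is a subgroup of $G$: since $C_r^2$ is characteristic in $C_r$, the conjugation action of $C_s$ stabilizes it, and restricting to $C_s^2$ supplies the required semidirect structure.

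For the inclusion $G^2\subseteq C_r^2\rtimes C_s^2$, the key step is the direct computation
$$
(a^ib^j)^2 \;=\; a^i\cdot(b^ja^ib^{-j})\cdot b^{2j} \;=\; a^{i(1+c^j)}b^{2j}.
$$
The factor $b^{2j}$ visibly lies in $C_s^2$. For the factor $a^{i(1+c^j)}$: when $r$ is odd there is nothing to check since $C_r^2=C_r$; when $r$ is even, $c$ is coprime to $r$ and therefore odd, so $1+c^j$ is even and $a^{i(1+c^j)}\in C_r^2$. Because every squared element of $G$ lies in $C_r^2\rtimes C_s^2$, so does every product of such squares, and hence all of $G^2$.

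The reverse inclusion is immediate: $a^2$ generates $C_r^2$ and $b^2$ generates $C_s^2$, and both are squares in $G$, so $C_r^2\cdot C_s^2\subseteq G^2$. Combining the two inclusions gives the asserted equality.

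With the identity in hand, the quotient $G/G^2$ has order $[C_r:C_r^2]\,[C_s:C_s^2]$, which is the first displayed line, and the second is then the algebraic identity $xy-1=(x-1)(y-1)+(x-1)+(y-1)$ applied with $x=[C_r:C_r^2]$ and $y=[C_s:C_s^2]$, using $I_2(H)=[H:H^2]-1$. The only mildly delicate point in the whole argument is the parity observation in the $r$-even case --- that automorphisms of $C_r$ must act by odd exponents whenever $2\mid r$ --- which is the pivot on which the first inclusion turns; everything else is direct calculation or bookkeeping.
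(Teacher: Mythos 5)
Your proof is correct and follows essentially the same route as the paper's: the same square computation $(a^ib^j)^2=a^{i(1+c^j)}b^{2j}$, the same parity argument that $c$ must be odd when $r$ is even, and the same observation that $C_r^2$ being characteristic makes $C_r^2\rtimes C_s^2$ a genuine subgroup. Your explicit remark that closure under products of squares is what upgrades ``every square lies in $C_r^2\rtimes C_s^2$'' to ``$G^2\subseteq C_r^2\rtimes C_s^2$'' is a small but welcome point of care that the paper leaves implicit.
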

\begin{proof}
If $C_r=\langle x\rangle$ and $C_s=\langle y \rangle$ then
$$
C_r^2=\begin{cases} \langle x\rangle & \text{$r$ odd} \\ 
                   \langle x^2\rangle & \text{$r$ even}\\
\end{cases}
$$
and similarly for $C_s^2$. In either case, $C_r^2$ is characteristic in $C_r$ meaning that $C_r^2\rtimes C_s^2$ {\it is} a subgroup of $C_r\rtimes C_s$ where clearly $C_r^2\rtimes C_s^2\leq (C_r\rtimes C_s)^2$. Now any semi-direct product 
$C_r\rtimes C_s$ arises due to an action of the form $y(x)=x^u$ for $u\in U_{r}$. If $(x^i,y^j)\in C_r\rtimes C_s$ then $(x^i,y^j)^2=(x^{i+u^ji},y^{2j})$, where $y^{2j}$ 
clearly lies in $C_s^2$. The question is whether the first coordinate $x^{i+u^ji}$ lies in $C_r^2$. However, this is easy since if $r$ is even then $u$ must be odd and thus $1+u^j$ is even, which means $i(1+u^j)$ is even. And if $r$ is odd then, as observed above, $C_r=C_r^2$ so that, either way, $x^{i(1+u^j)}\in C_r^2$.
\end{proof}
We have seen examples already in degree 12 and 24,
\begin{align*}
I_2(C_{3}\rtimes C_{4})&=I_2(C_{3})I_2(C_{4})+I_2(C_{3})+I_2(C_{4})=0\cdot 1+0+1=1\\
I_2(C_{3}\rtimes C_{8})&=I_2(C_{3})I_2(C_{8})+I_2(C_{3})+I_2(C_{8})=0\cdot 1+0+1=1\\
\end{align*}
and similarly, we can control $I_2(C_r\rtimes C_s)$ by careful choices of $r$ and $s$, to make it $0$ and/or $1$. If we define 
\begin{align*}
z_2(n)&=\text{the number of groups of order $n$ with no index two subgroups}\\
u_2(n)&=\text{the number of groups of order $n$ with one index two subgroup}\\
\end{align*}
then we have empty pairings $R(G,[M])$ corresponding to $z_2(n)*u_2(n)$ for $n\leq 256$.\par\newpage
\begin{tabular}{|c|c|c|c|c|}\hline
 $n$ & $z_2$ & $u_2$ & $z_2*u_2$ & $(\text{\# of groups of order $n$})^2$ \\ \hline
12 & 1 & 2 & 2 & 25\\ \hline
24 & 1 & 4 & 4 & 225\\ \hline 
36 & 2 & 6 & 12 & 196\\ \hline
48 & 2 & 8 & 16 & 2704\\ \hline
56 & 1 & 2 & 2 & 169\\ \hline
60 & 2 & 6 & 12 & 169\\ \hline
72 & 2 & 13 & 26 & 2500\\ \hline
80 & 1 & 3 & 3 & 2704\\ \hline
84 & 2 & 6 & 12 & 225\\ \hline
96 & 3 & 15 & 45 & 53361\\ \hline
108 & 7 & 18 & 126 & 2025\\ \hline
120 & 2 & 12 & 24 & 2209\\ \hline
132 & 1 & 4 & 4 & 100\\ \hline
144 & 5 & 25 & 125 & 38809\\ \hline
156 & 2 & 9 & 18 & 324\\ \hline
160 & 1 & 5 & 5 & 56644\\ \hline
168 & 5 & 12 & 60 & 3249\\ \hline
180 & 3 & 18 & 54 & 1369\\ \hline
192 & 9 & 39 & 351 & 2380849\\ \hline
204 & 1 & 6 & 6 & 144\\ \hline
216 & 8 & 45 & 360 & 31329\\ \hline
228 & 2 & 6 & 12 & 225\\ \hline
240 & 4 & 26 & 104 & 43264\\ \hline
252 & 5 & 18 & 90 & 2116\\ \hline
\end{tabular}\par
%\vskip-4.7in\hskip3.5in$\leftarrow$\fbox{\parbox{3.0in}{note that the $|CharSub_{12}(M)|>|Sub_{12}(G)|$\\ criterion holds for 10 pairings overall}}
\section{Non Index Two Examples}
Even though index 2 subgroups are a convenient source of 'counterexamples' to the condition $|Sub_m(G)|\geq|CharSub_m(N)|$ condition, there are of course many other possible subgroup orders where our method applies. For example, $R(A_5,[C_5 \times A_4])=\O$ which is known already by Byott's result since $A_5$ is simple, of course, but can be inferred by our method because $C_5\times A_4$ has a unique subgroup of order $20$ since $A_4$ has a unique subgroup of order $4$. Another example is $((C_5 \times C_5) \rtimes C_3,[C_{75}])$ since $|Sub_{15}(C_{75})|=1$ of course, and $|CharSub_{15}((C_5 \times C_5) \rtimes C_3)|=0$ since any subgroup of order $15$ in $(C_5 \times C_5) \rtimes C_3$ would have to be cyclic and intersect the $(C_{5}\times C_{5})\cong \mathbb{F}_{5}^2$ component in a subgroup of order $5$. And since no automorphism of $C_5\times C_5$ of order $3$ could arise due to scalar multiplication, this aforementioned subgroup of order $5$ would not be normal in $C_5\times C_5$. One other example we can consider is the case of $R(S_5,[C_{120}])$. Of course, $C_{120}$ has one subgroup of order $15$, but since any group of order $15$ is cyclic, then it's clear that $S_5$ has no such subgroup.\par
Index two or not, using GAP, \cite{GAP4} one can readily enumerate the subgroups, both characteristic and otherwise, of each group of a given low order. We present a table of some compiled counts of the number of pairs $R(G,[M])$ which are forced to be empty because $|Sub_m(G)|<|CharSub_m(M)|$ for some $m$, which we denote $|Z|$, as compared with square of the number of groups of order $n$, denoted $|R|^2$, representing all possible pairings of groups of order $n$. We also should point out that, if the criterion applied, it frequently happened in index 2.\par\newpage
\begin{multicols*}{3}
\begin{tabular}{|c|c|c|}\hline
 $n$ & $|Z|$ & $|R|^2$ \\ \hline
1 & 0 & 1 \\ \hline
2 & 0 & 1 \\ \hline
3 & 0 & 1 \\ \hline
4 & 0 & 4 \\ \hline
5 & 0 & 1 \\ \hline
6 & 0 & 4 \\ \hline
7 & 0 & 1 \\ \hline
8 & 0 & 25 \\ \hline
9 & 0 & 4 \\ \hline
10 & 0 & 4 \\ \hline
11 & 0 & 1 \\ \hline
12 & 3 & 25 \\ \hline
13 & 0 & 1 \\ \hline
14 & 0 & 4 \\ \hline
15 & 0 & 1 \\ \hline
16 & 5 & 196 \\ \hline
17 & 0 & 1 \\ \hline
18 & 2 & 25 \\ \hline
19 & 0 & 1 \\ \hline
20 & 0 & 25 \\ \hline
21 & 0 & 4 \\ \hline
22 & 0 & 4 \\ \hline
23 & 0 & 1 \\ \hline
24 & 20 & 225 \\ \hline
25 & 0 & 4 \\ \hline
26 & 0 & 4 \\ \hline
27 & 0 & 25 \\ \hline
28 & 0 & 16 \\ \hline
29 & 0 & 1 \\ \hline
30 & 0 & 16 \\ \hline
31 & 0 & 1 \\ \hline
32 & 38 & 2601 \\ \hline
\end{tabular}
\vfill\null
\begin{tabular}{|c|c|c|}\hline
 $n$ & $|Z|$ & $|R|^2$ \\ \hline
33 & 0 & 1 \\ \hline
34 & 0 & 4 \\ \hline
35 & 0 & 1 \\ \hline
36 & 34 & 196 \\ \hline
37 & 0 & 1 \\ \hline
38 & 0 & 4 \\ \hline
39 & 0 & 4 \\ \hline
40 & 11 & 196 \\ \hline
41 & 0 & 1 \\ \hline
42 & 0 & 36 \\ \hline
43 & 0 & 1 \\ \hline
44 & 0 & 16 \\ \hline
45 & 0 & 4 \\ \hline
46 & 0 & 4 \\ \hline
47 & 0 & 1 \\ \hline
48 & 244 & 2704 \\ \hline
49 & 0 & 4 \\ \hline
50 & 2 & 25 \\ \hline
51 & 0 & 1 \\ \hline
52 & 0 & 25 \\ \hline
53 & 0 & 1 \\ \hline
54 & 8 & 225 \\ \hline
55 & 0 & 4 \\ \hline
56 & 15 & 169 \\ \hline
57 & 0 & 4 \\ \hline
58 & 0 & 4 \\ \hline
59 & 0 & 1 \\ \hline
60 & 28 & 169 \\ \hline
61 & 0 & 1 \\ \hline
62 & 0 & 4 \\ \hline
63 & 0 & 16 \\ \hline
64 & 1576 & 71289 \\ \hline
\end{tabular}
\vfill\null
\begin{tabular}{|c|c|c|}\hline
 $n$ & $|Z|$ & $|R|^2$ \\ \hline
65 & 0 & 1 \\ \hline
66 & 0 & 16 \\ \hline
67 & 0 & 1 \\ \hline
68 & 0 & 25 \\ \hline
69 & 0 & 1 \\ \hline
70 & 0 & 16 \\ \hline
71 & 0 & 1 \\ \hline
72 & 422 & 2500 \\ \hline
73 & 0 & 1 \\ \hline
74 & 0 & 4 \\ \hline
75 & 1 & 9 \\ \hline
76 & 0 & 16 \\ \hline
77 & 0 & 1 \\ \hline
78 & 0 & 36 \\ \hline
79 & 0 & 1 \\ \hline
80 & 149 & 2704 \\ \hline
81 & 5 & 225 \\ \hline
82 & 0 & 4 \\ \hline
83 & 0 & 1 \\ \hline
84 & 28 & 225 \\ \hline
85 & 0 & 1 \\ \hline
86 & 0 & 4 \\ \hline
87 & 0 & 1 \\ \hline
88 & 4 & 144 \\ \hline
89 & 0 & 1 \\ \hline
90 & 8 & 100 \\ \hline
91 & 0 & 1 \\ \hline
92 & 0 & 16 \\ \hline
93 & 0 & 4 \\ \hline
94 & 0 & 4 \\ \hline
95 & 0 & 1 \\ \hline
96 & 4197 & 53361 \\ \hline
\end{tabular}
\end{multicols*}\par\newpage
\begin{multicols*}{3}
\begin{tabular}{|c|c|c|}\hline
 $n$ & $|Z|$ & $|R|^2$ \\ \hline
97 & 0 & 1 \\ \hline
98 & 2 & 25 \\ \hline
99 & 0 & 4 \\ \hline
100 & 20 & 256 \\ \hline
101 & 0 & 1 \\ \hline
102 & 0 & 16 \\ \hline
103 & 0 & 1 \\ \hline
104 & 11 & 196 \\ \hline
105 & 0 & 4 \\ \hline
106 & 0 & 4 \\ \hline
107 & 0 & 1 \\ \hline
108 & 327 & 2025 \\ \hline
109 & 0 & 1 \\ \hline
110 & 0 & 36 \\ \hline
111 & 0 & 4 \\ \hline
112 & 92 & 1849 \\ \hline
113 & 0 & 1 \\ \hline
114 & 0 & 36 \\ \hline
115 & 0 & 1 \\ \hline
116 & 0 & 25 \\ \hline
117 & 0 & 16 \\ \hline
118 & 0 & 4 \\ \hline
119 & 0 & 1 \\ \hline
120 & 350 & 2209 \\ \hline
121 & 0 & 4 \\ \hline
122 & 0 & 4 \\ \hline
123 & 0 & 1 \\ \hline
124 & 0 & 16 \\ \hline
125 & 0 & 25 \\ \hline
126 & 24 & 256 \\ \hline
127 & 0 & 1 \\ \hline
128 & 481816 & 5419584 \\ \hline
\end{tabular}
\vfill\null\hskip0.25in
\begin{tabular}{|c|c|c|}\hline
 $n$ & $|Z|$ & $|R|^2$ \\ \hline
129 & 0 & 4 \\ \hline
130 & 0 & 16 \\ \hline
131 & 0 & 1 \\ \hline
132 & 12 & 100 \\ \hline
133 & 0 & 1 \\ \hline
134 & 0 & 4 \\ \hline
135 & 0 & 25 \\ \hline
136 & 14 & 225 \\ \hline
137 & 0 & 1 \\ \hline
138 & 0 & 16 \\ \hline
139 & 0 & 1 \\ \hline
140 & 6 & 121 \\ \hline
141 & 0 & 1 \\ \hline
142 & 0 & 4 \\ \hline
143 & 0 & 1 \\ \hline
144 & 6790 & 38809 \\ \hline
145 & 0 & 1 \\ \hline
146 & 0 & 4 \\ \hline
147 & 2 & 36 \\ \hline
148 & 0 & 25 \\ \hline
149 & 0 & 1 \\ \hline
150 & 26 & 169 \\ \hline
151 & 0 & 1 \\ \hline
152 & 4 & 144 \\ \hline
153 & 0 & 4 \\ \hline
154 &        0 & 16 \\ \hline
155 &        0 & 4 \\ \hline
156 &       37 & 324 \\ \hline
157 &        0 & 1 \\ \hline
158 &        0 & 4 \\ \hline
159 &        0 & 1 \\ \hline
160 &     3145 & 56644 \\ \hline
\end{tabular}
\vfill\null\hskip0.125in
\begin{tabular}{|c|c|c|}\hline
 $n$ & $|Z|$ & $|R|^2$ \\ \hline
161 &        0 & 1 \\ \hline
162 &       70 & 3025 \\ \hline
163 &        0 & 1 \\ \hline
164 &        0 & 25 \\ \hline
165 &        0 & 4 \\ \hline
166 &        0 & 4 \\ \hline
167 &        0 & 1 \\ \hline
168 &      448 & 3249 \\ \hline
169 &        0 & 4 \\ \hline
170 &        0 & 16 \\ \hline
171 &        0 & 25 \\ \hline
172 &        0 & 16 \\ \hline
173 &        0 & 1 \\ \hline
174 &        0 & 16 \\ \hline
175 &        0 & 4 \\ \hline
176 &       54 & 1764 \\ \hline
177 &        0 & 1 \\ \hline
178 &        0 & 4 \\ \hline
179 &        0 & 1 \\ \hline
180 &      276 & 1369 \\ \hline
181 &        0 & 1 \\ \hline
182 &        0 & 16 \\ \hline
183 &        0 & 4 \\ \hline
184 &        4 & 144 \\ \hline
185 &        0 & 1 \\ \hline
186 &        0 & 36 \\ \hline
187 &        0 & 1 \\ \hline
188 &        0 & 16 \\ \hline
189 &        6 & 169 \\ \hline
190 &        0 & 16 \\ \hline
191 &        0 & 1 \\ \hline
192 &        219139 & 2380849 \\ \hline
\end{tabular}
\end{multicols*}
\par\newpage
\section{$R(C_{p^n},[A])$ Revisited}
Lastly, we consider an already solved problem! For $G=C_{p^n}$, for each $p^r|p^n$ one has, of course, $|Sub_{p^r}(G)|=1$.\par
For a non-cyclic Abelian $p$-group $M$ of order $p^n$, one has that $M\cong C_{p^{\lambda_1}} \times C_{p^{\lambda_2}}\cdots\times C_{p^{\lambda_t}}$ where $\lambda_1+\lambda_2+\cdots+\lambda_t=n$ is a partition, where, without loss of generality, $\lambda_1\leq \lambda_2\leq\cdots\leq \lambda_{t}$. Not unexpectedly, a given non-cyclic Abelian $p$-group has {\it many} subgroups for each order. 
Tarnauceanu and Toth, \cite{TarnauceanuToth}, aggregate a number of older results as:
\begin{theorem}
For every partition $\mu\preceq\lambda$ (i.e. $\mu_i\leq \lambda_i$) the number of subgroups of type $\mu$ in $G_{\lambda}$ is 
$$
\alpha_{\lambda}(\mu;p)=\prod_{i\geq 1}p^{(a_i-b_i)b_{i+1}}{a_i-b_{i+1}\choose b_i-b_{i+1}}_p,
$$
where $\lambda'=(a_1,\dots)$ and $\mu'=(b_1,\dots)$ are the partitions conjugate to $\lambda$ and $\mu$, respectively, and 
$$
{n\choose k}_p=\dfrac{\prod_{i=1}^{n}(p^i-1)}{\prod_{i=1}^{k}(p^i-1)\prod_{i=1}^{n-k}(p^i-1)}
$$
is the Gaussian binomial coefficient (it is understood that $\prod_{i=1}^{m}(p^i-1)=1$ for $m=0$).
\end{theorem}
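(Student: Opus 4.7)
The plan is to prove this classical formula (aggregating the work of Birkhoff and Delsarte) by a layered counting argument based on the canonical $p$-filtration of $G_\lambda$, reducing the enumeration of subgroups of type $\mu$ to subspace choices in the associated graded $\mathbb{F}_p$-vector spaces.

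First I would introduce the graded pieces $V_i = G_\lambda[p^i] / G_\lambda[p^{i-1}]$, each an $\mathbb{F}_p$-vector space of dimension $a_i$, together with the injection $V_{i+1} \hookrightarrow V_i$ induced by multiplication by $p$. A subgroup $H \leq G_\lambda$ of type $\mu$ gives, level by level, a subspace $U_i = H[p^i]/H[p^{i-1}]$ of $V_i$ of dimension $b_i$, and the image of $U_{i+1}$ under the canonical map $V_{i+1} \hookrightarrow V_i$ is contained in $U_i$. The preliminary step is to verify that the assignment $H \mapsto (U_i)_{i \geq 1}$ accounts for every subgroup of type $\mu$, and to characterize exactly which compatible flags $(U_i)$ arise.

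The second step is to count the compatible flags, organized from the top down in $i$. Having fixed $U_{i+1} \subseteq V_{i+1}$ of dimension $b_{i+1}$, together with its image of the same dimension inside $V_i$, the choice of $U_i$ of dimension $b_i$ in $V_i$ containing this image is precisely the choice of a $(b_i - b_{i+1})$-dimensional subspace of an $(a_i - b_{i+1})$-dimensional quotient of $V_i$, accounted for by the Gaussian binomial $\binom{a_i - b_{i+1}}{b_i - b_{i+1}}_p$.

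The third step is to enumerate the subgroups $H$ realizing a fixed compatible flag. Reconstructing $H$ from its flag amounts to choosing compatible lifts, level by level, of bases of the $U_i$ to generators inside $G_\lambda[p^i]$. At level $i$ the remaining ambiguity is an $\mathbb{F}_p$-affine space of dimension $(a_i - b_i) b_{i+1}$, contributing the factor $p^{(a_i - b_i) b_{i+1}}$. Multiplying the flag count against the fibre size and then taking the product over $i$ yields the stated expression for $\alpha_\lambda(\mu; p)$.

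The main obstacle will be the verification that the level-by-level counts are genuinely independent and that together they enumerate every subgroup of type $\mu$ exactly once. This is the technical heart of the argument and is typically handled by fixing an adapted ``Smith-type'' normal form for the inclusion $H \hookrightarrow G_\lambda$: such a normal form provides simultaneously a unique parametrization of subgroups of type $\mu$ and the required independence across levels, thereby licensing the factorization of the count as the displayed product.
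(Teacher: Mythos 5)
This theorem is not proved in the paper at all: it is quoted verbatim from Tarn\u{a}uceanu and T\'oth, who in turn aggregate classical results (Delsarte, Djubjuk, Yeh, Butler), so there is no in-paper argument to compare yours against. Judged on its own terms, your outline follows one of the standard routes to the formula -- the layer-by-layer analysis of the filtration $G_\lambda[p^1]\subseteq G_\lambda[p^2]\subseteq\cdots$ -- and its bookkeeping is consistent with the statement: $\dim_{\mathbb{F}_p} V_i=a_i$, $\dim U_i=b_i$ detects the type of $H$, multiplication by $p$ injects $V_{i+1}$ into $V_i$, and the top-down choice of $U_i\supseteq \mathrm{im}(U_{i+1})$ does contribute $\binom{a_i-b_{i+1}}{b_i-b_{i+1}}_p$. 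Small sanity checks (e.g. $\lambda=(1,2)$, $\mu=(2)$, giving $p$ cyclic subgroups of order $p^2$ in $C_p\times C_{p^2}$, all over a single flag) confirm the intended factorization into (number of flags)$\times$(fibre size).

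That said, what you have is a proof plan rather than a proof, and the two claims you defer are exactly where all the work lives. First, you must show that every compatible flag $(U_i)$ with the stated dimensions is realized by some subgroup of type $\mu$; otherwise the flag count overshoots. Second, and harder, you must show the fibre over each flag has cardinality exactly $\prod_i p^{(a_i-b_i)b_{i+1}}$ \emph{independently of the flag}. Be careful here: the naive count of lifts of a basis of $U_{i+1}$ into $G_\lambda[p^{i+1}]$ overcounts, since distinct systems of lifts frequently generate the same subgroup (in $C_p\times C_{p^2}$ there are $p^2$ lifts of a generator of $U_2$ but only $p$ distinct subgroups); the exponent $(a_i-b_i)b_{i+1}$ arises only after quotienting the lift ambiguity $G_\lambda[p^i]^{\,b_{i+1}}$ by the subgroup of perturbations that preserve $H$, and identifying the result with $(V_i/U_i)^{b_{i+1}}$, or equivalently after fixing an adapted basis as you suggest. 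Until the Smith-normal-form (or adapted-basis) step is actually carried out and shown to give a bijective parametrization, the multiplicativity over $i$ is asserted, not established. So: right strategy, correct target formula, but the technical heart is still missing.
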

In \cite{KerbyTurner} Kerby and Rode  (extending an old result due to Reinhold Baer) show that the characteristic subgroups of $M$ of order $p^r$ correspond to partitions/tuples of $r$, $\bold{a}=\{a_i\}$ termed 'canonical', namely
\begin{itemize}
\item $a_i\leq a_{i+1}$ for all $i\in\{2,\dots,t\}$ and
\item $a_{i+1}-a_{i}\leq \lambda_{i+1}-\lambda_{i}$ for all $i\in\{1,\dots,t-1\}$
\end{itemize}
where, the total number of subgroups of order $r$ would be the total number of such partitions for each $r$ from $1$ to $n$.\par
What one discovers is that for sufficiently large $n$ there are various $r\leq n$ for which there are more than one canonical partitions of $r$. For example, if $M=C_{p}\times C_{p^3}$ ($n=4$) there are two canonical partitions of $2$, namely $\{1,1\}$ and $\{0,2\}$, which therefore correspond to two characteristic subgroups of order $p^2$. As such $R(C_{p^4},[C_{p}\times C_{p^3}])=\O$. Another example is for $M=C_{p}\times C_{p^4}$, where there are two characteristic subgroups of order $p^2$ and two of order $p^3$.\par
For $n=6$ we have four different partitions of $n$ which each give rise to more than one canonical tuples for subgroups of particular orders, namely $6=1+2+3=1+1+4=2+4=1+5$, and thus
\begin{itemize}
\item $R(C_{p^6},[C_{p}\times C_{p^2}\times C_{p^3}])=\O$
\item $R(C_{p^6},[C_{p}\times C_{p}\times C_{p^4}])=\O$
\item $R(C_{p^6},[C_{p^2}\times C_{p^4}])=\O$
\item $R(C_{p^6},[C_{p}\times C_{p^5}])=\O$
\end{itemize}
Looking at larger $n$, we can consider all the partitions of $n$, which we denote $np$ and then count those which give rise to {\it more than one} canonical tuples for some $r\leq n$, which we denote $nc$. One observes that the fraction $nc/np$ of partitions of $n$ which give rise to $>1$ characteristic subgroups of some order approaches 1.\par
\begin{multicols}{2}
\begin{tabular}{|c|c|c|c|}\hline
 $n$ & $nc$ & $np$ & $nc/np$ \\ \hline
 1&  0& 1& 0 \\ \hline
 2&  0& 2& 0 \\ \hline
 3&  0& 3& 0 \\ \hline
 4&  1& 5& 0.2 \\ \hline
 5&  1& 7& 0.142 \\ \hline
 6&  4& 11& 0.363 \\ \hline
 7&  4& 15& 0.266 \\ \hline
 8&  10& 22& 0.454\\ \hline
 9&  13& 30& 0.433\\ \hline
 10&  23& 42& 0.547\\ \hline
 11&  27& 56& 0.482\\ \hline
 12&  52& 77& 0.675\\ \hline
 13&  60& 101& 0.594\\ \hline
\end{tabular}\hskip0.5in
\columnbreak
\begin{tabular}{|c|c|c|c|}\hline
 $n$ & $nc$ & $np$ & $nc/np$ \\ \hline
 14&  94& 135& 0.696\\ \hline
 15&  118& 176& 0.670\\ \hline
 16&  175& 231& 0.757\\ \hline
 17&  213& 297& 0.717\\ \hline
 18&  310& 385& 0.805\\ \hline
 19&  373& 490& 0.761\\ \hline
 20&  528& 627& 0.842\\ \hline
 21&  643& 792& 0.811\\ \hline
 22&  862& 1002& 0.860\\ \hline
 23&  1044&1255& 0.832\\ \hline
 24&  1403 & 1575 & 0.891\\ \hline
 25 &  1699& 1958 & 0.868\\ \hline
 26 & 2199 & 2436 & 0.903\\ \hline

\end{tabular}\par
\end{multicols}
The takeaway from this is that we should expect $R(C_{p^n},[M])$ to be empty for most non-cyclic Abelian $p$-groups. Of course, this is not a new result, but it's interesting to compare this method to the usual argument which relies on the impossibility of $G\leq Hol(N)$ if $G$ is cyclic of order $p^n$ and $N$ is a non-cyclic $p$-group of the same order.
\bibliography{charsub}
\bibliographystyle{plain}
\end{document}